\def\frk{\frak}               
\def\Phi{{\frk n}}
\def\Phi{{\frk N}}
\def\opn#1#2{\def#1{\operatorname{#2}}} 
\opn\chara{char} \opn\length{\ell} \opn\pd{pd} \opn\rk{rk}
\opn\projdim{proj\,dim} \opn\injdim{inj\,dim} \opn\rank{rank}
\opn\depth{depth} \opn\sdepth{sdepth} \opn\fdepth{fdepth}
\opn\grade{grade} \opn\height{height} \opn\embdim{emb\,dim}
\opn\codim{codim}  \opn\min{min} \opn\max{max}
\opn\Tr{Tr} \opn\bigrank{big\,rank}
\opn\superheight{superheight}\opn\lcm{lcm}
\opn\trdeg{tr\,deg}
\opn\reg{reg} \opn\lreg{lreg} \opn\ini{in} \opn\lpd{lpd}
\opn\size{size}
\opn\div{div} \opn\Div{Div} \opn\cl{cl} \opn\Cl{Cl}
\opn\Spec{Spec} \opn\Supp{Supp} \opn\supp{supp} \opn\Sing{Sing}
\opn\Ass{Ass} \opn\Min{Min}
\opn\Ann{Ann} \opn\Rad{Rad} \opn\Soc{Soc}
\opn\Im{Im} \opn\Ker{Ker} \opn\Coker{Coker} \opn\Am{Am}
\opn\Hom{Hom} \opn\Tor{Tor} \opn\Ext{Ext} \opn\End{End}
\opn\Aut{Aut} \opn\id{id}  \opn\deg{deg}
\opn\nat{nat}
\opn\pff{pf}
\opn\Pf{Pf} \opn\GL{GL} \opn\SL{SL} \opn\mod{mod} \opn\ord{ord}
\opn\Gin{Gin} \opn\Hilb{Hilb}
\opn\aff{aff} \opn\con{conv} \opn\relint{relint} \opn\st{st}
\opn\lk{lk} \opn\cn{cn} \opn\core{core} \opn\vol{vol}
\opn\link{link} \opn\star{star}
\opn\gr{gr}
\def\pot#1#2{#1[\kern-0.28ex[#2]\kern-0.28ex]}
\opn\dirlim{\underrightarrow{\lim}}
\opn\inivlim{\underleftarrow{\lim}}
\let\to=\rightarrow
\def\Implies{\ifmmode\Longrightarrow \else
        \unskip${}\Longrightarrow{}$\ignorespaces\fi}
\def\implies{\ifmmode\Rightarrow \else
        \unskip${}\Rightarrow{}$\ignorespaces\fi}
\def\iff{\ifmmode\Longleftrightarrow \else
        \unskip${}\Longleftrightarrow{}$\ignorespaces\fi}
\newtheorem{Theorem}{Theorem}[]
\newtheorem{Lemma}[Theorem]{Lemma}
\newtheorem{Corollary}[Theorem]{Corollary}
\newtheorem{Proposition}[Theorem]{Proposition}
\theoremstyle{definition}
\newtheorem{Example}[Theorem]{Example}
\let\epsilon\varepsilon
\let\phi=\varphi
\let\kappa=\varkappa
\def\qed{\ifhmode\textqed\fi
      \ifmmode\ifinner\quad\qedsymbol\else\dispqed\fi\fi}
\def\textqed{\unskip\nobreak\penalty50
       \hskip2em\hbox{}\nobreak\hfil\qedsymbol
       \parfillskip=0pt \finalhyphendemerits=0}
\def\dispqed{\rlap{\qquad\qedsymbol}}
\opn\dis{dis}
\def\pnt{{\raise0.5mm\hbox{\large\bf.}}}
\opn\Lex{Lex}
\begin{document}

\title{\bf An Easy proof of the General \\ Neron Desingularization in dimension $\leq$ 1}
\author{  Asma Khalid and Zunaira Kosar}
\thanks{ We gratefully acknowledge the support from the ASSMS GC. University Lahore, for arranging our visit to Bucharest, Romania and we are also greatful to the Simion Stoilow Institute of the Mathematics of the Romanian Academy for inviting us.}

\address{Abdus Salam School of Mathematical Sciences,GC University, Lahore, Pakistan.}
\email{asmakhalid768@gmail.com}
\email{ zunairakosar@gmail.com}

\maketitle

\begin{abstract}  In this paper we give an easy proof of the General Neron Desingularization in the frame of regular morphisms between  Artinian local rings and  Noetherian local rings of  dimension   one.

 \noindent
  {\it Key words }:   Flatness, Completion, Smooth morphisms, regular morphisms.\\
 {\it 2010 Mathematics Subject Classification:  Primary 13B40, Secondary 14B25,13H05,13J15.}
\end{abstract}

\section*{Introduction}

A ring morphism $u : A \rightarrow A'$ has {\em  geometrically regular fibers} if
for all prime ideals $P \in Spec\ A$ and all finite field extensions $K$ of the fraction field of $A/P$ the ring $K \otimes_{A/P} A'/PA'$ is regular. A
flat morphism $u$ is {\em regular} if its fibers are geometrically regular. A regular morphism of finite type is {\em smooth}  and a localization of a smooth algebra is {\em essentially smooth}.

Let $(A,\mathfrak{m})$ be a Noetherian local  ring of dimension $\leq 1$, $A'$ a Noetherian local ring of dimension one and  $u:A\rightarrow A'$  a regular morphism. Let $B=A[Y]/I$, $Y=(Y_1,\ldots,Y_n)$. Then any morphism $v: B\rightarrow A'$ factors through a smooth $A$-algebra $C$, that is $v$ is a composite $A$-morphism $B\rightarrow C\rightarrow A'.$ This is a particular case of the General Neron Desingularization given in \cite{P0}, \cite{S}, \cite{artin}, with a very difficult proof. An algorithmic proof is given in \cite{adi} for the case when $A,A'$ are  domains, and it  is extended  in \cite{FP} for arbitrary Noetherian local rings  of dimension one.

Let $(A,\mathfrak{m})$ be a local Artinian ring, $(A',\mathfrak{m}')$ a Noetherian complete local ring of dimension one such that $k=A/\mathfrak{m}\cong A'/\mathfrak{m}'$, and $u:A\rightarrow A'$ be a regular morphism. Suppose that  $k\subset A$. Then $\bar {A}'=A'/\mathfrak{m}A'$ is a discrete valuation ring (shortly a DVR). Choose $x\in A'$ such that its class modulo $\mathfrak{m}A'$ is a local parameter of $\bar A'$, that is, it generates  $\mathfrak{m}'\bar A'$.
Let $B=A[Y]/I$, $Y=(Y_1,\ldots,Y_n)$.

The purpose of this paper is to give an easy proof to the following theorem.

\begin{Theorem}\label{main}
Then any morphism $v: B\rightarrow A'$ factors through a smooth $A$-algebra $C$, that is, $v$ is a composite $A$-morphism $B\rightarrow C\rightarrow A'.$
\end{Theorem}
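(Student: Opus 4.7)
The plan is threefold: first, identify $A'$ explicitly as a formal power series ring over $A$; then, use a polynomial approximation of $v$ to build an intermediate $A$-algebra $C$ together with the required factoring maps; finally, argue that $C$ (perhaps after localizing at an appropriate Jacobian minor) is smooth over $A$.

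\textbf{Step 1: Identification $A' \cong A\power{x}$.} Since $\mm$ is nilpotent and $\bar A' = A'/\mm A'$ is a complete DVR with residue field $k$, Cohen's structure theorem gives $\bar A' \cong k\power{x}$. Completeness of $A'$ extends the choice of $x$ to an $A$-algebra map $\phi : A\power{X} \to A'$, $X \mapsto x$. Reducing modulo $\mm$ gives the isomorphism $k\power{X} \cong k\power{x}$. Both $A\power{X}$ and $A'$ are $A$-flat, so an induction on $i$ along the filtration $\mm \supset \mm^2 \supset \cdots$ (using flat base change and the five-lemma on the exact sequences $0 \to \mm^i/\mm^{i+1} \otimes_k (M/\mm M) \to M/\mm^{i+1}M \to M/\mm^iM \to 0$ for $M = A\power{X}$ and $M = A'$) upgrades this to $\phi$ being an isomorphism modulo every $\mm^i$; nilpotency of $\mm$ then forces $\phi$ itself to be an isomorphism. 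Henceforth identify $A' = A\power{x}$.

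\textbf{Step 2: Polynomial truncation and construction of $C$.} Write $y_i := v(Y_i) = \sum_{j \ge 0} y_{i,j} x^j \in A\power{x}$. For $N$ large (to be chosen later), set $p_i(X) := \sum_{j < N} y_{i,j} X^j \in A[X]$ and $z_i := \sum_{j \ge 0} y_{i, j+N} x^j \in A\power{x}$, so that $y_i = p_i(x) + x^N z_i$. Let $I = (f_1, \ldots, f_r)$. A Taylor expansion of $f_j$ around $p(X)$ together with $f_j(y) = 0$ shows $f_j(p(x)) \in x^N A\power{x}$; comparing coefficients in $A$ (using that $f_j(p(X))$ is a polynomial) upgrades this to the polynomial divisibility $f_j(p(X)) \in X^N A[X]$. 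Similarly $f_j(p(X) + X^N W) = X^N G_j(X, W)$ for some $G_j \in A[X, W_1, \ldots, W_n]$. Put $C := A[X, W]/(G_1, \ldots, G_r)$; define $\sigma : B \to C$ by $Y_i \mapsto p_i(X) + X^N W_i$ (well-defined because each $f_j(Y)$ goes to $X^N G_j = 0$ in $C$) and $\pi : C \to A'$ by $X \mapsto x$, $W_i \mapsto z_i$ (well-defined because $x$ is a non-zero-divisor on $A\power{x}$ and $x^N G_j(x, z) = f_j(y) = 0$). By construction $\pi\sigma = v$.

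\textbf{Step 3: Smoothness of $C$, the main obstacle.} Differentiating the identity $f_j(p + X^N W) = X^N G_j$ with respect to $W_k$ and dividing by the non-zero-divisor $X^N \in A[X, W]$ yields $\partial G_j/\partial W_k = (\partial f_j/\partial Y_k)(p(X) + X^N W)$, so under $\pi$ the $W$-part of the Jacobian of $(G_1, \ldots, G_r)$ maps to $(\partial f_j/\partial Y_k)(y)$, the Jacobian matrix of the presentation of $B$ at $v$. To finish, the plan is to produce an $r \times r$ minor $\Delta$ of this Jacobian whose image in $A\power{x}$ is a unit (equivalently, whose value at $x=0$ is a unit in $A$); localizing $C$ at $\Delta$, or adjoining a formal inverse $Z$ with $Z\Delta = 1$, then yields a standard smooth $A$-algebra through which $v$ factors. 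Achieving this requires choosing $N$ large enough and possibly replacing $(f_1, \ldots, f_r)$ by a more favourable generating set of $I$; it is at this step that the dimension-one hypothesis on $A'$ and the Artinianness of $A$ come into play in an essential way. An alternative route, perhaps cleaner, is to carry out the construction first over the residue field $k$ (where $\bar A' = k\power{x}$ is a DVR and smoothness is easier to arrange) and then lift the smooth $\bar C$ and its factorization to $A$ via the infinitesimal lifting property of smoothness, using nilpotency of $\mm$.
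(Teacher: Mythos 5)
Your Steps 1 and 2 are correct but they are only setup: the identification $A'\cong A\otimes_k k[[x]]\cong A[[x]]$ is also the starting point of the paper, and the truncation $Y_i\mapsto p_i(X)+X^NW_i$ with $f_j(p(X)+X^NW)=X^NG_j$ is the standard N\'eron-blowup construction. The genuine gap is Step 3, and it is not a technical loose end --- it is the entire content of the theorem. Your plan is to choose $N$ large and a ``favourable'' generating set of $I$ so that some $r\times r$ minor of the Jacobian becomes a \emph{unit} in $A'$ after evaluation at $v$. In general no such minor exists for the given presentation (take $B=k[Y]/(Y^2)$, $v(Y)=0$: every $G_j$ built from $f=Y^2$ has Jacobian entries mapping to $0$, for every $N$), and the correct statement one can extract from regularity of $u$ is much weaker: as in the quoted Theorem 2 of \cite{P}, one only gets an element $N\in((f):I)$ and a minor $M$ with $v(NM)\notin (x)^c$ for some $c$, i.e.\ a non-zero-divisor of controlled order, not a unit. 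Upgrading that to a smooth factorization is exactly what the General N\'eron Desingularization machinery does; asserting that it can be ``achieved by choosing $N$ large enough and replacing the generators'' is to assume the theorem. Note also that even your base case $A=k$, $A'=k[[x]]$ is not handled: there the same difficulty appears and the paper resolves it by invoking the classical N\'eron desingularization for the regular inclusion $k[x]_{(x)}\subset k[[x]]$ (Corollary \ref{corolary2}) together with a filtered-colimit descent (Lemma \ref{lemma1}) to land in a finite type, hence honestly smooth, $k$-algebra.

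Your suggested alternative (do everything over $k$ and lift to $A$ by infinitesimal lifting) is closer in spirit to the paper but also has a gap: formal smoothness lets you lift the smooth algebra $\bar C$ and the map $\bar C\to \bar A'$ across the nilpotent kernel of $A'\to \bar A'$, but it does not by itself produce a lift of the \emph{factorization}, i.e.\ maps $B\to C\to A'$ whose composite is the given $v$ rather than merely congruent to it modulo $\mathfrak m A'$. The paper sidesteps both obstacles with one observation you are missing: writing $v(Y_i)=\sum_{|\alpha|<s} y_{i\alpha}T^\alpha$ with $y_{i\alpha}\in k[[x]]$, the map $v$ factors through $A\otimes_k\bar B_1$, where $\bar B_1=k[(y_{i\alpha})_\alpha]\subset k[[x]]$ is the finite type $k$-subalgebra generated by the coefficients. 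One then desingularizes only the inclusion $\bar v_1:\bar B_1\hookrightarrow k[[x]]$ over $k$ (Proposition \ref{prop}) and tensors the resulting smooth $k$-algebra $\bar C$ with $A$; since $A\otimes_k\bar C$ is automatically smooth over $A$ and $A'=A\otimes_k k[[x]]$, the factorization of $v$ comes for free, with no Jacobian analysis over $A$ and no lifting argument. You would need to supply either this coefficient-subalgebra reduction or a complete proof of Step 3 for your argument to stand.
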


 The proof is illustrated in Example \ref{ex}.

\section{Preliminaries}

A ring morphism $h: A \rightarrow B$ is called \emph{quasi-smooth} \cite{S} if for any $A$-algebra $D$ and an ideal $I \subset D$ with $I^2 = 0$, any $A$-morphism $B \rightarrow D/I$ lifts to an $A$-morphism $B \rightarrow D$. A finitely presented quasi-smooth morphism is  smooth. Let $u : A \rightarrow B$ be a quasi smooth morphism and $C$ an $A$-algebra. Then $C\otimes u$ is also quasi smooth which is known as the \emph{Base change} property.

Let $A$ be a DVR, $x$ a local parameter of $A$, $A' = \hat A$ its completion and $B = A[Y ]/I$, $Y = (Y_1,\ldots,Y_n)$ a finite type $A$-algebra. If $f = (f_1,\ldots, f_r)$,
$r\leq n$ is a system of polynomials from $I$ then we consider an $r \times r$-minor $M$ of the Jacobian matrix $(\partial f_i/\partial Y_j)$. Let
$c \in N$. Suppose that there exist an $A$- morphism $v : B \rightarrow A'/(x^{2c+1})$ and $N \in ((f) : I)$ such that $v(NM) \notin (x)^{c}/(x^{2c+1})$,
where for simplicity we write $v(NM)$ instead $v(NM + I)$.

\begin{Theorem}\cite{P}
There exists a $B$-algebra $C$ which is smooth over $A$ such that every $A$-morphism $v': B \rightarrow A'$ with $v'\equiv v\ modulo\ x^{2c+1}$ (that is $v'(Y ) \equiv v(Y )\ modulo\ x^{2c+1}$) factors through $C$.
\end{Theorem}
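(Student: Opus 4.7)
The strategy is to pass to the closed fiber $\bar A' = A'/\mathfrak m A'$ (a complete DVR, as already observed in the paper's preamble), apply the [P] Theorem there to obtain a smooth $\bar A'$-algebra through which the induced map $\bar v$ factors, and then lift the resulting smooth algebra to an $A$-algebra, exploiting the nilpotency of $\mathfrak m$ in $A$.

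Concretely, let $\bar v : \bar B := k[Y]/\bar I \to \bar A'$ be the reduction of $v$ modulo $\mathfrak m A'$, where $\bar I := Ik[Y]$. Since $\bar A'$ is a domain, standard arguments (primary decomposition of $\bar I$ in $k[Y]$, combined with the fact that $\bar v$ detects only one minimal component of $\bar I$) produce generators $f_1, \ldots, f_r \in I$, an $r\times r$ Jacobian minor $M$ of $(\partial f_i/\partial Y_j)$, and $N \in ((f):I)$ with $\bar v(M) \neq 0$ and $\bar v(N)$ a unit in $\bar A'$. Let $c$ be the $x$-adic order of $\bar v(M)$. The hypothesis of the [P] Theorem, applied with DVR $\bar A'$ (equal to its own completion), base ring $\bar A'[Y]/\bar I\bar A'[Y]$, and the truncation of $\bar v$ modulo $x^{2c+1}$, then reads $\bar v(NM) \notin (x)^c/(x^{2c+1})$, which holds by the choice of $c$. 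The Theorem supplies a smooth $\bar A'$-algebra $\bar C$, with a $\bar B$-algebra structure, such that $\bar v$ factors as $\bar B \to \bar C \to \bar A'$.

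The core remaining task is to lift $\bar C$ to a smooth $A$-algebra $C$ receiving $v$. The construction in [P] represents $\bar C$ explicitly as $\bar A'[Y, T_1, \ldots, T_s]/(\bar G_1, \ldots, \bar G_s)$, where each $\bar G_i$ is a polynomial in $Y, T$ whose coefficients lie in $k[x]$ --- that is, polynomial in the uniformizer rather than in $k[[x]]$ --- because the construction performs only polynomial manipulations of $f, M, N$ (together with the single element $x$). Since $k \subset A$, replacing $x$ by a fresh indeterminate $X$ lifts the $\bar G_i$ to $\tilde G_i \in A[X, Y, T]$ via the same polynomial operations performed over $A$. Define
$$C := A[X, Y, T]/(\tilde G_1, \ldots, \tilde G_s),$$
localized at $M$ if necessary. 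The Jacobian identity underlying smoothness of $\bar C$ over $\bar A'$ is itself a polynomial identity, so it lifts unchanged and shows $C$ is smooth over $A$. A factorization $B \to C \to A'$ is given by $X \mapsto x$, $Y_j \mapsto v(Y_j)$, and by lifting the images of the $T_j$ step by step from $\bar A'$ to $A'$ along the nilpotent filtration $\mathfrak m A' \supset \mathfrak m^2 A' \supset \cdots \supset \mathfrak m^N A' = 0$ using formal smoothness of $C$ over $A$.

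The principal obstacle is the lifting step: one must inspect the [P] proof carefully to confirm that the defining equations of $\bar C$ can be taken with genuinely \emph{polynomial} coefficients in $x$ rather than arbitrary elements of $k[[x]]$. This is intrinsic to the N\'eron--Popescu construction, which only performs polynomial operations on $f, M, N$, but it requires careful bookkeeping. Once this is secured, the descent from $\bar A'$ to $A$ and the iterated smooth lift of the $T_j$ across the nilpotent ideals $\mathfrak m^i A'$ are routine consequences of formal smoothness of $C/A$ and the nilpotency of $\mathfrak m$.
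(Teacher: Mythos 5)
This statement is quoted in the paper as a black box from \cite{P}; the paper supplies no proof of it, so the only construction to measure yours against is the one in that reference. Your attempt has two structural problems before any details are examined. First, it is circular: your opening move is to ``apply the [P] Theorem'' to reduced data, i.e.\ you invoke the very statement you are asked to prove. Second, you have transplanted the hypotheses of the paper's main Theorem~\ref{main} into this preliminary one. Here $A$ is a DVR with local parameter $x$ and $A'=\hat A$ is its completion, so $\mathfrak{m}A'=xA'$ and $A'/\mathfrak{m}A'$ is the residue field $k$ --- not ``a complete DVR'' --- and there is no nilpotent maximal ideal. The entire architecture of your argument (reduction to the closed fiber, reading off the order $c$ of $\bar v(M)$ inside $\bar A'$, lifting step by step along $\mathfrak{m}A'\supset\mathfrak{m}^2A'\supset\cdots\supset\mathfrak{m}^NA'=0$) therefore does not parse in this setting: the closed fiber is a field and carries no $x$-adic information, and the filtration you propose to lift along is not nilpotent.

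Even granting all of that, you prove the wrong statement. In the theorem, $v$ is only given as a morphism $B\to A'/(x^{2c+1})$, and the conclusion is uniform: one must produce a single smooth $B$-algebra $C$, depending only on $v$ modulo $x^{2c+1}$, through which \emph{every} $A$-morphism $v'\colon B\to A'$ congruent to $v$ factors. Your construction fixes one morphism into $A'$ and factors it; the quantifier over all lifts $v'$ --- which is the entire point of the result, since it is what later permits replacing $v$ by a convenient approximation --- is never addressed. The proof in \cite{P} is, in outline, a direct N\'eron-style construction: lift $v(Y)$ to $y\in A'^{\,n}$ with $f(y)\equiv 0$ modulo $(x^{2c+1})$, let $d$ be a lift of $v(NM)$ (of $x$-order at most $c$ by hypothesis), substitute $Y=y'+dZ$ for new variables $Z$, expand $f$ by Taylor's formula so that $d$ and then $d^2$ can be factored out of the resulting equations, and exhibit explicit polynomial relations whose relevant Jacobian minor becomes a unit, yielding a standard smooth $A$-algebra $C$; any $v'$ with $v'(Y)\equiv y$ modulo $(x^{2c+1})$ then factors through $C$ because the discrepancy lies in $(d^2)\subseteq(x^{2c})$ and can be absorbed into the values of the $Z$-variables. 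None of this mechanism is present in your proposal, and the ``polynomial coefficients in $x$'' bookkeeping you flag as the main obstacle is not where the difficulty of the theorem lies.
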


\begin{Corollary}\label{corolary2}
Any $A$-morphism $v:B\to A'$ factors through a smooth $A$-algebra $C$.
\end{Corollary}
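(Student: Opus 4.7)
The plan is to exhibit data $(f_1,\dots,f_r;\ M;\ N;\ c)$ satisfying the hypothesis of the preceding theorem for $v$ reduced modulo $x^{2c+1}$, so that $v$ itself (being trivially congruent to itself) factors through the smooth $B$-algebra $C$ the theorem produces.

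First I would reduce to the case in which $v$ is injective and $B$ is a domain. Let $\Pp_0\subset A[Y]$ be the preimage of the prime $\pp_0:=v^{-1}(0)\subset B$; since $v(x)=x\neq 0$ in the DVR $A'$, one has $x\notin\Pp_0$. The morphism $v$ factors as $A[Y]/I\twoheadrightarrow A[Y]/\Pp_0\hookrightarrow A'$, and $A[Y]/\Pp_0$ is again of the form demanded by the theorem. Any smooth factorisation of the injective map $v':A[Y]/\Pp_0\to A'$ composes with the surjection to give one for $v$, so we may replace $B$ by $A[Y]/\Pp_0$ and assume $B$ is a domain with $v$ injective. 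Under this reduction, $v(NM)\neq 0$ becomes equivalent to $NM\notin\Pp_0$ in $A[Y]$.

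Next I would produce the Jacobian data by generic smoothness of the generic fibre. Set $K=\mathrm{Frac}(A)$ and $K'=\mathrm{Frac}(A')$; inverting $x$ gives $B\otimes_A K=K[Y]/\Pp_0 K[Y]$, a finitely generated $K$-domain whose fraction field $L$ embeds in $K'$. Regularity of $u$ makes $K'/K$ separable, whence the finitely generated subextension $L/K$ is separable and, by MacLane's criterion, separably generated. Put $r:=\height(\Pp_0 K[Y])=n-\trdeg_K L\leq n$. Then $K[Y]_{\Pp_0 K[Y]}$ is a regular local ring of dimension $r$ whose residue field $L$ is separable over $K$, so the Jacobian criterion at this generic point supplies $f_1,\dots,f_r\in\Pp_0 K[Y]$ generating $\Pp_0 K[Y]_{\Pp_0 K[Y]}$ together with an $r\times r$ minor $M$ of $(\partial f_i/\partial Y_j)$ lying outside $\Pp_0 K[Y]$. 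Because $x$ is a unit in $K[Y]$ and $\Pp_0 K[Y]\cap A[Y]=\Pp_0$, multiplying the $f_i$ by suitable powers of $x$ we may arrange $f_i\in\Pp_0\subset A[Y]$, and then $M\in A[Y]\setminus\Pp_0$. The local-generation equality $(f)K[Y]_{\Pp_0 K[Y]}=\Pp_0 K[Y]_{\Pp_0 K[Y]}$ produces $N_0\in K[Y]\setminus\Pp_0 K[Y]$ with $N_0\Pp_0\subset(f)K[Y]$; clearing denominators by a high enough power of $x$ (using that $\Pp_0$ is finitely generated in the Noetherian ring $A[Y]$) gives $N\in A[Y]\setminus\Pp_0$ with $N\Pp_0\subset(f)A[Y]$, i.e.\ $N\in(f):\Pp_0$.

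Finally, since $B=A[Y]/\Pp_0$ is a domain and $N,M\notin\Pp_0$, we have $NM\notin\Pp_0$, so $v(NM)$ is a nonzero element of the DVR $A'$ of finite $x$-adic valuation $s$. Choosing $c>s$, the image of $v(NM)$ in $A'/(x^{2c+1})$ lies outside $(x)^c/(x^{2c+1})$, and the hypothesis of the preceding theorem is satisfied for $v$ reduced modulo $x^{2c+1}$. The theorem then provides a $B$-algebra $C$ smooth over $A$ through which every $A$-morphism congruent to $v$ modulo $x^{2c+1}$ factors; in particular $v$ does, which is the corollary. The main obstacle is producing the pair $(M,N)$ with $NM\notin\Pp_0$ after the reduction step -- equivalently, showing that the Jacobian ideal of the presentation $A[Y]/\Pp_0$ is not contained in $\Pp_0$ -- and this rests crucially on the separability of $L/K$ obtained from the regularity of $u$; the reduction to injective $v$ and the clearing of denominators in $A[Y]$ are purely formal.
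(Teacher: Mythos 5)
Your argument is correct and follows the route the paper intends: the paper states this corollary without proof, remarking only that it follows from the preceding theorem of \cite{P} (and from classical N\'eron desingularization \cite{N}), and your reduction to $B=A[Y]/\Pp_0$ a domain followed by the construction of the Jacobian data $(f_1,\dots,f_r;M;N)$ with $NM\notin\Pp_0$ and the choice $c>\operatorname{ord}_x v(NM)$ is exactly the standard way to verify the hypothesis of that theorem for $v$ itself. The one point worth making explicit is that your key step uses the separability of $\mathrm{Frac}(\hat A)$ over $\mathrm{Frac}(A)$, i.e.\ the regularity of $A\to\hat A$, which is not literally part of the corollary's stated setup (it amounts to excellence of the DVR $A$ and can fail in characteristic $p$); your proof correctly isolates this as the essential hypothesis, and it does hold in the only place the paper invokes the corollary, namely for $A_1=k[x]_{(x)}$.
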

This corollary follows also from the classical N\'eron desingularization \cite{N}.
\begin{Theorem}\cite{FP}
Let $u : A \rightarrow A'$ be a regular morphism of one dimensional Noetherian local rings and $B$ a finite type $A$-algebra. Suppose that $A$ contains its residue field. Then any $A$-morphism $v : B \rightarrow A'$
factors through a smooth $A$-algebra $C$, that is $v$ is a composite $A$-morphism $B \rightarrow C \rightarrow A'$.
\end{Theorem}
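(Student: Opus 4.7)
The strategy has four parts: (i) identify $A'$ with the power series ring $A\power{X}$ using regularity of $u$; (ii) use a $k$-basis of $A$ to ``separate variables'', encoding the data of $v$ in a morphism of $k$-algebras into $k\power{X}$; (iii) apply Corollary \ref{corolary2} over the DVR $k[X]_{(X)}$; and (iv) base-change back to $A$ and clean up.

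\emph{Step 1 (structure of $A'$).} Since $u$ is regular, the closed fiber $\bar A'=A'/\mathfrak{m}A'$ is geometrically regular over $k$, of dimension $1$ by the flat dimension formula; hence $\bar A'$ is a complete DVR with residue field $k$, and $\bar A'\iso k\power{x}$ by Cohen. The $A$-morphism $\phi:A\power{X}\to A'$ sending $X\mapsto x$ is well defined by $\mathfrak{m}'$-adic completeness of $A'$, and its reduction modulo $\mathfrak{m}$ is the Cohen isomorphism. An induction on $i\leq s$ (where $\mathfrak{m}^s=0$), using $A$-flatness of both sides to identify
\[
\mathfrak{m}^iA\power{X}/\mathfrak{m}^{i+1}A\power{X}\iso (\mathfrak{m}^i/\mathfrak{m}^{i+1})\tensor_k k\power{X}\iso \mathfrak{m}^iA'/\mathfrak{m}^{i+1}A',
\]
shows $\phi$ is an isomorphism. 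Identify $A'=A\tensor_k k\power{X}$ henceforth.

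\emph{Steps 2--3 (separation of variables and DVR case).} Choose a $k$-basis $b_1=1,b_2,\dots,b_d$ of $A$ with $b_l\in\mathfrak{m}$ for $l>1$, and write $v(Y_i)=\sum_l b_l h_{il}$ with $h_{il}\in k\power{X}$. Letting $I=(f_1,\dots,f_m)\subset A[Y]$ and introducing new indeterminates $H=(H_{il})$, expand uniquely $f_j(\sum_l b_l H_{il})=\sum_l b_l F_{jl}(H)$ in $A[H]$, with $F_{jl}\in k[H]$. Define
\[
B_k:=k[H]/(F_{jl}:j,l).
\]
Since $A\tensor_k k\power{X}$ is free over $k\power{X}$ on $\{b_l\}$, the identity $f_j(v(Y))=0$ forces $F_{jl}(h)=0$, producing a $k$-morphism $w_0:B_k\to k\power{X}$, $H_{il}\mapsto h_{il}$; and $Y_i\mapsto\sum_l b_lH_{il}$ yields an $A$-morphism $\iota_0:B\to A\tensor_k B_k$, with $(\id\tensor w_0)\circ\iota_0=v$. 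Let $\tilde B_k:=B_k\tensor_k k[X]_{(X)}$ and extend $w_0$ to a $k[X]_{(X)}$-morphism $\tilde w_0:\tilde B_k\to k\power{X}$ by $X\mapsto X$. By Corollary \ref{corolary2} applied to the DVR $k[X]_{(X)}$, $\tilde w_0$ factors through a smooth $k[X]_{(X)}$-algebra $\tilde C_k$ as $\tilde w_0=\beta\circ\alpha$.

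\emph{Step 4 (base change and finite type).} Set $C^{\mathrm{es}}:=A\tensor_k\tilde C_k$; by base change along $k\to A$, $C^{\mathrm{es}}$ is smooth over $A[X]_{(X)}=A\tensor_k k[X]_{(X)}$, hence essentially smooth over $A$, and the composite
\[
B\xrightarrow{\iota_0}A\tensor_k B_k\to A\tensor_k\tilde B_k\xrightarrow{\id\tensor\alpha}C^{\mathrm{es}}\xrightarrow{\id\tensor\beta}A\tensor_k k\power{X}=A'
\]
equals $v$. To upgrade to a smooth $A$-algebra of finite type, note that the images of the $Y_i$ in $C^{\mathrm{es}}$ use only finitely many denominators $s_1,\dots,s_p\in k[X]\setminus(X)$ from the localization; inverting these together with the Jacobian minor witnessing smoothness of $\tilde C_k$ yields a finitely presented smooth $A$-subalgebra $C\subset C^{\mathrm{es}}$ through which $v$ still factors.

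\emph{Main obstacle.} The crux is Step 1: extracting $A'\iso A\power{X}$ from the Artinian-plus-regular hypothesis. Once this identification is secured, Step 2 is straightforward bookkeeping against a chosen $k$-basis, and Steps 3--4 are direct applications of the DVR-case N\'eron desingularization and of base change. The ``separation of variables'' device of Step 2 is the key insight permitting the $A$-algebra problem to be recast as a $k$-algebra problem amenable to the DVR case.
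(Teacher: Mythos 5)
There is a genuine gap: your argument proves a different (and weaker) statement than the one asserted. The theorem you are asked to prove takes $A$ to be a \emph{one-dimensional} Noetherian local ring and does not assume $A'$ complete, and your Step 1 is incompatible with these hypotheses in three ways. First, you invoke $\mathfrak{m}^s=0$, which holds only for Artinian $A$; for a one-dimensional $A$ such as $k[t]_{(t)}$ no such $s$ exists, $A$ has no finite $k$-basis, and the separation-of-variables device of Step 2 (finitely many $b_l$, finitely many unknowns $H_{il}$) is unavailable. Second, the flat dimension formula gives $\dim(A'/\mathfrak{m}A')=\dim A'-\dim A=0$ when $\dim A=1$, so the closed fiber is a zero-dimensional geometrically regular local $k$-algebra, i.e., a finite separable field extension of $k$ --- not a DVR --- and the identification $A'\cong A\otimes_k k[[x]]$ fails outright (take $A=k[t]_{(t)}$, $A'=k[[t]]$, $u$ the completion map: here $\mathfrak{m}A'=\mathfrak{m}'$). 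Third, you use $\mathfrak{m}'$-adic completeness of $A'$ to define $\phi$, which is not among the hypotheses.

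What your Steps 1--4 actually establish is the case $\dim A=0$ with $A'$ complete and $k\subset A$, i.e., Theorem \ref{main} of this paper; and there your route coincides with the paper's own proof: decompose $v(Y_i)$ along a $k$-basis of $A$ (the paper uses the monomials $T^\alpha$), reduce to a morphism $\bar v_1:\bar B_1\to k[[x]]$, apply the DVR case (Corollary \ref{corolary2} via Proposition \ref{prop}), and tensor the resulting smooth $k$-algebra back up with $A$; your Step 4 finite-type repair plays the role of Lemma \ref{lemma1}. The genuinely one-dimensional statement quoted from \cite{FP} is only cited, not proved, in this paper, and its proof requires a different mechanism (a quantitative N\'eron-type lifting along powers of a suitable element, as in \cite{P}); your proposal does not supply that missing ingredient.
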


\section{Proof of Theorem \ref{main} when $A=k$}

Then $A'$ is a complete DVR and has the form $A'=k[[x]]$. Let $A_1=k[x]_{(x)}$ and $u_1$ the inclusion $A_1\subset A'$. Then $u_1$ is flat because $A_1$ is principal and $A'$ has no $A$-torsion. The fraction field extension induced by $u_1$ is separable and $A'$ is the completion of $A_1$. It follows that  $u_1$ is regular because $A_1$ is excellent (see e.g. \cite{Mat}).
Let be as above, $B$ a $k$-algebra of finite type, and $v: B\rightarrow A'$ an $A$-morphism. Let $B_1=A_1\otimes_A B$ and $v_1:B_1\rightarrow A'$ be the composite map $a_1\otimes b\mapsto u_1(a_1)\cdot v(b)$.

\begin{Lemma}\label{lemma1}
Suppose that $v_1$ factors through a smooth $A_1$-algebra $\tilde{C}$ then $v$ factors through a smooth $A$-algebra $C$.
\end{Lemma}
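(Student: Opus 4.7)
The plan is to show that $\tilde{C}$ is essentially smooth over $A=k$, and then to extract from it a smooth $k$-subalgebra $C$ through which $v$ factors. The key observation is that $A_1=k[x]_{(x)}$ is essentially smooth over $k$, being a localization of the smooth $k$-algebra $k[x]$ at the maximal ideal $(x)$; composing with the smooth morphism $A_1\to\tilde{C}$ should give that $\tilde{C}$ is essentially smooth over $k$ as well.

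Concretely, I would present $\tilde{C}=A_1[Z_1,\ldots,Z_m]/(g_1,\ldots,g_s)$ with an $s\times s$ minor $M$ of the Jacobian $(\partial g_i/\partial Z_j)$ a unit in $\tilde{C}$. After clearing denominators by units of $A_1$, one may assume $g_i\in k[x,Z]$, so setting $D=k[x,Z]/(g_1,\ldots,g_s)$ gives $\tilde{C}=S^{-1}D$ with $S=k[x]\setminus(x)$. Since $M$ is a unit in $\tilde{C}$, there is some $h\in S$ such that $M$ is already a unit in $D_h$; by the Jacobian criterion, $D_h$ is smooth over $k[x]_h$, which in turn is smooth over $k$, so $D_h$ is smooth over $k$.

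Then I would trace the composition $B\to B_1\to\tilde{C}$, which sends each generator $Y_i$ of $B$ to some $\alpha_i=a_i/s_i\in S^{-1}D$, and note that each relation $f_j(\alpha)=0$ holding in $\tilde{C}$ is annihilated in $D$ by some element of $S$. Replacing $h$ by its product with the $s_i$ and these finitely many annihilators (using that $I$ is finitely generated) produces a well-defined $k$-algebra map $B\to D_h=:C$. The enlarged $C$ remains smooth over $k$, and the composition $C\to\tilde{C}\to A'$ is well-defined because $h\in S$ is a unit in $A'$; this yields the desired factorization $B\to C\to A'$.

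The main technical point is the ``spreading out'' argument: a relation holding in $S^{-1}D$ already holds after inverting a single element of $S$, and similarly for the unit witness for $M$. This turns essential smoothness into smoothness after one principal localization, and combined with the Jacobian criterion delivers smoothness over $k$. I expect the cleanest exposition to require careful bookkeeping of which denominators are being cleared, so that a single $h\in k[x]\setminus(x)$ handles all the simultaneous conditions at once.
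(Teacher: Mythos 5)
Your proposal is correct and follows essentially the same route as the paper: both descend the essentially smooth $A_1$-algebra $\tilde{C}$ to a finite-type model over $k[x]$ and then invert a single $h\in k[x]\setminus(x)$ collecting all the denominators needed to factor $B\to\tilde{C}$. Your extra step of also inverting annihilators so that the relations in $I$ die in $D_h$ is a welcome refinement the paper leaves implicit, but it does not change the argument.
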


\begin{proof}

Since $B_1=A_1\otimes_k B=A_1[Y]/IA_1[Y]$ where $Y=(Y_1, \ldots,Y_n$) and $v_1$ can be factored through a smooth $A_1$-algebra $\tilde{C}$,  we get the following the commutative diagram:
$$
  \begin{xy}\xymatrix{A \ar[dr] \ar[ddr] \ar[r] & A_1 \ar[d]^{\delta} \ar[r]^{u_1} & A'\\
  & B_1 \ar[r]^{\beta} \ar[ur]^{v_1} & \tilde{C} \ar[u]_{w}\\
  & B \ar[u]_{\gamma}}
  \end{xy}
  $$

\noindent where $ \delta: A_1 \to B_1$ is given by $a_1\mapsto a_1\otimes 1$, $a_1\in A_1$, $ \gamma: B \to B_1$ is given by $b \mapsto 1\otimes b$, $b\in B$ and $v_1$ is the composite map $B_1\xrightarrow{\beta} {\tilde C}\xrightarrow{w} A'$. Then
$\tilde{C}$ has the form $(A_1[Z]/(f_1, \ldots , f_r))_{M\tilde{h}}$, $Z=(Z_1,\ldots,Z_s)$ for some $r\times r$-minor $M$ of the Jacobian matrix $\partial f/\partial Z$ and some  $\tilde{h}\in A_1[Z]$   such that $w(\tilde{h})$ is invertible in $A'$. Clearly $\tilde{C}$ is essentially smooth over $k$  but unfortunately not smooth because it is not a $k$-algebra of finite type. Now choose $h\in k[x]\setminus (x)$ such that $f_1, \ldots , f_r$ have coefficients in $k[x,Z]_{h}$. Set $E=(k[x,Z]_{h}/(f_1, \ldots , f_r))_{M,\tilde{h}}$, then $\tilde{C}$ is a localization of $E$ with respect to all $g\in k[x]\setminus (x)$. Thus $\tilde{C}$ is the filtered inductive limit of $E_g$,  $g\in k[x]\setminus (x)$. Let $\varphi_{g}:E_g\rightarrow \tilde{C}$ be the limit maps.

 We claim that $\alpha=\beta\gamma $ factors through a certain $E_g$. Indeed, let $\alpha(\hat{Y_i})=q_i/g_i$ for some $q_i \in E$ and $ g_i \in k[x]\setminus (x)$. Then $\alpha$ factors through $E_g$ for $g=\prod_{i=1}^{n}g_i$. Clearly $v$ is the composite map  $B\rightarrow C=E_g \xrightarrow{\varphi_g} \tilde{C}\xrightarrow{w}A'$, that is, $v$ factors through the smooth $k$-algebra $C$.
\hfill\ \end{proof}

The following proposition is closed to \cite[Corollary 16]{P}.
\begin{Proposition}\label{prop}
Suppose that $A=k$, then $v$ factors through a smooth $k$-algebra $C$.
\end{Proposition}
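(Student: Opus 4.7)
The plan is to reduce the statement, via Lemma \ref{lemma1}, to producing a factorization of $v_1\colon B_1\to A'$ through a smooth $A_1$-algebra, and then to obtain that factorization by invoking Corollary \ref{corolary2} applied to the DVR $A_1$.

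More precisely, the first step is to verify that the hypotheses of Corollary \ref{corolary2} are satisfied for the data $(A_1,A',B_1,v_1)$. Here $A_1=k[x]_{(x)}$ is a DVR with local parameter $x$, and the paragraph preceding the proposition already records that $A'=k\power{x}$ is the completion of $A_1$. Moreover $B_1=A_1\otimes_k B=A_1[Y]/IA_1[Y]$ is a finite type $A_1$-algebra, and $v_1$ is an $A_1$-morphism into $A'$ by construction. Thus Corollary \ref{corolary2} applies verbatim and yields a smooth $A_1$-algebra $\tilde C$ together with a factorization $v_1\colon B_1\xrightarrow{\beta}\tilde C\xrightarrow{w}A'$.

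The second step is immediate: since we have shown that $v_1$ factors through a smooth $A_1$-algebra $\tilde C$, Lemma \ref{lemma1} directly produces a smooth $k$-algebra $C$ together with a factorization $v\colon B\to C\to A'$, completing the proof.

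There is essentially no obstacle here beyond checking that the reduction in Lemma \ref{lemma1} is applicable; the main content has been packaged into Corollary \ref{corolary2} (the N\'eron desingularization for the completion of a DVR) and into Lemma \ref{lemma1} (descent from the localization $A_1$ to the field $k$). The only point worth emphasizing is the observation that $u_1\colon A_1\to A'$ realizes $A'$ as the completion of $A_1$, which is what allows Corollary \ref{corolary2} to be used rather than the more involved \cite{FP} theorem; this is precisely what makes the proof easy.
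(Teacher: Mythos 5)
Your proof is correct and follows exactly the paper's own argument: apply Corollary \ref{corolary2} to the DVR $A_1=k[x]_{(x)}$ (whose completion is $A'=k[[x]]$) to factor $v_1$ through a smooth $A_1$-algebra $\tilde C$, then conclude by Lemma \ref{lemma1}. No discrepancies to report.
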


\begin{proof}
By Corollary \ref{corolary2}, $v_1$ factors through a smooth $A_1=k[x]_{(x)}$-algebra $\tilde C$. Let's say $v_1$ is the composite map $B_1\rightarrow \tilde C\xrightarrow{w} A'$. Then by Lemma \ref{lemma1}, $v$ factors through a smooth $k$-algebra $C$.
\hfill\ \end{proof}

\section{The Proof of Theorem \ref{main}}

Since $A$ is an Artinian local ring  there exists a certain $s$ such that $\mathfrak{m}^s=0$. $A$ has the form $A=k[T]/{\mathfrak a}$, $T=(T_1,\ldots,T_m)$, and the maximal ideal of $A$ is generated by $T$. Then for all $i\in [m]=\{1,\ldots, m\}$, ${T_i}^s\in {\mathfrak a}$ and $A'=k[[x]][T]/(a) \cong A \otimes_k k[[x]]$. Suppose that $B=A[Y]/I$ where $Y=(Y_1,\ldots, Y_n)$, $v:B\rightarrow A'$ and
$v(\hat{Y_i})=\hat{y}_i=\sum_{\alpha \in \mathbb{N}^m, |\alpha|<s}y_{i\alpha}\cdot T^\alpha$,  $T^\alpha={T_1}^{\alpha_1}\cdots {T_m}^{\alpha_m},\ |\alpha|=\alpha_1+\cdots+\alpha_m\ < s$ and $y_{i\alpha}\in k[[x]]$. We have the following commutative diagram.

$$
  \begin{xy}\xymatrix{k \ar[d] \ar[r] & \bar{B} \ar[r] &\bar{B}_1 \ar[r]^{\bar{v}_1} & k[[x]]\\
  A \ar[r] & A\otimes_{k}\bar{B} \ar[r] & A\otimes_{k}\bar{B}_1 \ar[r] & A\otimes_{k}k[[x]]=A' \ar[u]}
  \end{xy}
  $$

\noindent where $\bar{B}=B/ \mathfrak{m} B$. Set $\overline{B_1}=k[(y_{i\alpha})_\alpha]\subset k[[x]]$ and  let $\overline{v_1}$ be this inclusion. Then $v$ factors through $B_1=A\otimes_k {\bar B}_1\subset A'$, that is $v$ is the composite map $B\xrightarrow{q} B_1\xrightarrow{A\otimes_k{\bar v}_1} A'$, where $q$ is defined by $Y_i\to \sum_{\alpha} T^{\alpha}\otimes y_{i\alpha}$. Applying the Proposition \ref{prop}, we see that
${\bar v}_1$ factors through a smooth $k$-algebra ${\bar C}$.  Then $A\otimes_k{\bar v}_1$ factors through $C=A\otimes_k {\bar C}$ and so $v$ factors through $C$.

\begin{Example} \label{ex}
Let $A={\bf Q}[t]/(t^2)$, $A'={\bf Q}[[x]][t]/(t^2)$, $A_1={\bf Q}[x]_{(x)}[t]/(t^2)$,
$B=A[Y_1,Y_2]/(Y_1^3-Y_2^2)$ and $u_1,v_1$ two formal power series from ${\bf Q}[[x]]$
 which are algebraically independent over ${\bf Q}(x)$ and $u_1(0)=v_1(0)=1$. By the Implicit Function
 Theorem there exists $u_2\in k[[x]]$ such that $u_2^2=u_1^3$. Set $v_2=(3/2)xu_1^2v_1u_2^{-1}$,
 ${\hat y}_1=x^2u_1+tv_1$, ${\hat y}_2=x^3u_2+txv_2$. We have
 $f({\hat y}_1,{\hat y_2})=x^6(u_1^3-u_2^2)+tx^4(3u_1^2v_1-2u_2v_2)=0$
and we may define $v:B\to A'$ by $Y\to ({\hat y}_1,{\hat y_2})$. Take $\overline{B_1}=
{\bf Q}[x,x^2u_1,x^3u_2,v_1,xv_2]$. Then ${\bar v}={\bf Q}\otimes_Av: B/tB\to {\bf Q}[[x]]$ factors through
 $\overline{B_1}$. Let $\phi:{\bf Q}[x,Y_{10},Y_{20},Y_{11},Y_{21}]\to \overline{B_1}$ be given by
 $Y_{10}\to x^2u_1$, $Y_{20}\to x^3u_2$, $Y_{11}\to v_1$, $Y_{21}\to xv_2$. Then $\Ker \phi$ contains
 $g_1=Y_{10}^3-Y_{20}^2$, $g_2=3Y_{10}^2Y_{11}-2Y_{20}Y_{21}$. The Jacobian matrix
$\partial g_k/\partial Y_{ij}$ contains a $2\times 2$-minor $M=4Y_{20}^2$ which is mapped by
$\phi$ in $4x^6u_2^2$. Taking $c=7$ we see that $\phi(M)\not \equiv 0$ modulo $x^c$. Note that $\phi$ induces
 the isomorphism $F={\bf Q}[x,Y_{10},Y_{20},Y_{11},Y_{21}]/(g_1,g_2)\cong \overline{B_1}$ because
$\Ker \phi=(g_1,g_2)$ since $\dim F=\dim \overline{B_1}=3$, $u_1,v_1$ being algebraically independent
over ${\bf Q}(x)$.

Take $E={\bf Q}[x,u_1,u_2,v_1,v_2]$. The kernel of the map $\psi:{\bf Q}[x,U_1,U_2,V_1,V_2]\to E$ contains
the polynomials $h_1=U_1^3-U_2^2$, $h_2=3U_1V_1-2U_2V_2$ and as above we see that
${\bf Q}[x,U_1,U_2,V_1,V_2]/(h_1,h_2)\cong E$. Now the Jacobian matrix $\partial h/\partial (U_i,V_i)$
 contains a minor $M'=4U_2^2$ which is mapped by $\psi$ in $4u_2^2\not \in (x)$. Then
$${\bar C}=({\bf Q}[x,U_1,U_2,V_1,V_2]/(h_1,h_2))_{V_2U_2U_1}\cong
 ({\bf Q}[x,U_1,V_1,V_2]/(U_1^3-(9/4)U_1^2V_1^2V_2^{-2}))_{V_2U_1}$$
$$\cong ({\bf Q}[x,U_1,V_1,V_2]/(U_1-(9/4)V_1^2V_2^{-2}))_{V_2}\cong {\bf Q}[x,V_1,V_2]_{V_2}$$
 is smooth over $\bf Q$ and the inclusion $\bar v_1:\overline{B_1}\to {\bf Q}[[x]]$ factors through $\bar C$ because $\bar v_1$ is the composite map
$$\overline{B_1}\cong {\bf Q}[x,Y_{10},Y_{20},Y_{11},Y_{21}]/(g_1,g_2)\xrightarrow{\rho} {\bar C}\xrightarrow{\psi} E\to {\bf Q}[[x]],$$
 where $\rho $ is given by $Y_{10}\to x^2U_1$,$Y_{20}\to x^3U_2$, $V_i\to V_i$. Then $C=A\otimes_{\bf Q}{\bar C}$ is smooth over $A$ and   $v$ factors through $C$ because it is the composite map
$B\xrightarrow{\nu} B_1=A\otimes_{\bf Q}\overline {B_1}\xrightarrow{A\otimes_{\bf Q}{\bar v_1}} A\otimes_{\bf Q}{\bf Q}[[x]]\cong A',$
where $\nu$ is given by $Y_1\to (1\otimes x^2u_1+t\otimes v_1)$, $Y_2\to (1\otimes x^3u_2+t\otimes xv_2)$.
\end{Example}

\end{document}